\documentclass[oneside,english, 11pt]{amsart}
\usepackage[T1]{fontenc}
\usepackage[utf8]{inputenc}
\usepackage{enumerate}
\usepackage{amsthm}
\usepackage{amstext}
\usepackage{amssymb}
\usepackage{amsmath}
\usepackage{mathtools}
\makeatletter
\numberwithin{equation}{section}
\numberwithin{figure}{section}
\theoremstyle{plain}
\newtheorem{thm}{\protect\theoremname}
  \theoremstyle{plain}
  \newtheorem{pro}[thm]{\protect\propositionname}
  \theoremstyle{remark}
  
  \theoremstyle{definition}
  
  \theoremstyle{plain}
  
    \theoremstyle{definition}
  \newtheorem{exa}[thm]{\protect\examplename}

\theoremstyle{remark}


\usepackage[english]{babel}
  \providecommand{\definitionname}{Definition}
  \providecommand{\lemmaname}{Lemma}
  \providecommand{\propositionname}{Proposition}
  \providecommand{\remarkname}{Remark}
\providecommand{\theoremname}{Theorem}
\providecommand{\examplename}{Example}
\providecommand{\assertionname}{Assertion}

\DeclareMathOperator{\cs}{CS}

\DeclareMathOperator{\bb}{BB}
\DeclareMathOperator{\gsv}{GSV}

\makeatletter

\makeatother

\author{Percy Fernández}
\author{Liliana Puchuri}
\author{Rudy Rosas}




\begin{document}

\title{Foliations on $\mathbb{P}^2$ with only one singular point}

\maketitle

\begin{abstract} In this paper we study holomorphic foliations on $\mathbb{P}^2$ with only one singular point. If the singularity has algebraic multiplicity one, we prove that the foliation has no
invariant algebraic curve. We also present several examples of such foliations in degree three.
\end{abstract}
\section{introduction}

The problem of classification of holomorphic foliations in the complex projective plane 
has been the object of intense study in the last decades --- see for example \cite{B,OC,CeLi} and the references therein. The special case of foliations of degree $0$ and $1$ is well understood and
the classification  was developed in \cite{CDbook, Jou}. 
 The case of foliations of degree $2$ is much more difficult and we only have partial classifications in some special cases, for example if the foliation has a Morse type center singularity
(\cite{CeLi,D}) or if the foliation has a unique singularity in the projective plane (\cite{CD}).  In this paper, we study foliations of degree $d\geq 3$ with a unique singularity in  $\mathbb{P}^2$ and  our main result is the following:

 \begin{thm}\label{teorema1}Let $\mathcal{F}$ be a holomorphic foliation of degree $d\ge 1$  on $\mathbb{P}^2$ with only one singularity. If this singularity has algebraic multiplicity one, then $\mathcal{F}$ has no invariant algebraic curve. 
\end{thm}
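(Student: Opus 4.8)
The plan is to argue by contradiction, via the Camacho--Sad index theorem together with the reduction of singularities at $p$. Suppose $\mathcal{F}$ has an invariant algebraic curve, and let $C$ be an irreducible component of it, of degree $k\ge 1$. Since $\sing(C)\subseteq\sing(\mathcal{F})=\{p\}$, the curve $C$ is smooth off $p$, and the Camacho--Sad index $\cs(\mathcal{F},C,q)$ vanishes wherever $C$ is smooth and $\mathcal{F}$ regular; hence
\[
\cs(\mathcal{F},C,p)=\sum_{q\in C}\cs(\mathcal{F},C,q)=C\cdot C=k^{2},
\]
so in particular $p\in C$. The same positivity shows that no line not through $p$ is invariant, so we may take the line at infinity off $p$ and view $\mathcal{F}$ near $p$ as a germ of vector field $X$ whose only zero in $\mathbb{C}^{2}$ is the origin; as a degree-$d$ foliation has total Milnor number $d^{2}+d+1$, all concentrated at $p$, we get $\mu(\mathcal{F},p)=d^{2}+d+1$.

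Next I would extract the local structure forced by $\nu(\mathcal{F},p)=1$. The linear part $DX_{0}$ is nonzero; it cannot be invertible (otherwise $\mu(\mathcal{F},p)=1$), so it has rank exactly one, and in particular it is not scalar, whence the blow-up of $p$ is non-dicritical: the exceptional line $E_{1}$ is invariant by the transform $\mathcal{F}_{1}$, with $E_{1}^{2}=-1$, $\sum_{q\in E_{1}}\cs(\mathcal{F}_{1},E_{1},q)=-1$, and $\mathcal{F}_{1}$ has at most two singular points on $E_{1}$, lying over the at most two characteristic directions of $DX_{0}$. Writing $C=\{f=0\}$ and comparing lowest-order terms in $\omega\wedge df=f\,\theta$, the leading homogeneous part of $f$ at $p$ is a semi-invariant of $DX_{0}$; since $DX_{0}$ has rank one, this confines $C$ to be tangent to one of its (at most two) eigendirections, and in particular (as holds at any non-dicritical singularity) $\nu_{p}(C)\le\nu(\mathcal{F},p)+1=2$ and $C$ has at most two branches at $p$. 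Finally, the number of blow-ups needed to resolve $C$ at $p$ --- that is, $\delta_{p}(C)$ --- is controlled by $\mu(\mathcal{F},p)=d^{2}+d+1$, because a separatrix of a singularity of algebraic multiplicity one cannot be arbitrarily singular relative to the Milnor number.

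To finish I would iterate the reduction of singularities over $p$, simultaneously resolving $C$, obtaining $\pi\colon\widetilde M\to\mathbb{P}^{2}$ for which $\widetilde{\mathcal{F}}=\pi^{*}\mathcal{F}$ has only reduced singularities, all lying on the exceptional divisor $\mathcal{E}$, and for which the strict transform $\widetilde C$ is smooth, invariant, and meets $\mathcal{E}$ only at those singularities. Camacho--Sad for $\widetilde C$ then reads
\[
k^{2}-\textstyle\sum_{i}m_{i}^{2}=\widetilde C\cdot\widetilde C=\sum_{q\in\widetilde C\cap\mathcal{E}}\cs(\widetilde{\mathcal{F}},\widetilde C,q),
\]
where the $m_{i}\le 2$ are the multiplicities of $C$ at the successive blow-up centres. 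At a corner $q\in\widetilde C\cap E_{j}$ the Camacho--Sad indices along $\widetilde C$ and along $E_{j}$ are rigidly linked (reciprocal at a non-degenerate reduced singularity, one of them zero at a saddle-node), so each $\cs(\widetilde{\mathcal{F}},\widetilde C,q)$ is bounded in terms of $\cs(\widetilde{\mathcal{F}},E_{j},q)$, which is itself pinned by the relations $\sum_{q\in E_{j}}\cs(\widetilde{\mathcal{F}},E_{j},q)=E_{j}^{2}\le-1$ propagating through $\mathcal{E}$ --- whose combinatorics are tightly restricted because $p$ is the only singularity (so no branching occurs) and has algebraic multiplicity one (at most two singular points over each centre). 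The main obstacle is precisely this bookkeeping: one must convert the qualitative picture of $\mathcal{E}$ into a sharp upper bound for $\sum_{q\in\widetilde C\cap\mathcal{E}}\cs(\widetilde{\mathcal{F}},\widetilde C,q)$ --- equivalently, a sharp local upper bound for $\cs(\mathcal{F},C,p)$ in terms of $d$ alone --- that falls strictly below $k^{2}$ for every $k\ge1$, contradicting the displayed identity. The Baum--Bott identity $\bb(\mathcal{F},p)=(d+2)^{2}$ and the $\gsv$/genus formula for $\widetilde C$ serve as numerical cross-checks throughout.
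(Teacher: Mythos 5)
There is a genuine gap, and it sits exactly where you yourself place ``the main obstacle'': you never produce the upper bound on $\sum_{q\in\widetilde C\cap\mathcal E}\cs(\widetilde{\mathcal F},\widetilde C,q)$ that your contradiction requires, and along the route you propose no such bound exists. Since $\mu(\mathcal F,p)=d^2+d+1>1$ while the algebraic multiplicity is one, the singularity is a saddle-node or nilpotent; in the saddle-node case (and, after reduction, in part of the nilpotent case) the germ $(C,p)$ is forced to contain the weak separatrix of a saddle-node, and the Camacho--Sad index of a weak separatrix is an analytic modulus that can take \emph{any} complex value --- it is not determined, nor even bounded, by the combinatorics of the resolution. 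So your step ``each $\cs(\widetilde{\mathcal F},\widetilde C,q)$ is bounded in terms of $\cs(\widetilde{\mathcal F},E_{j},q)$'' fails precisely at a corner where $E_j$ carries the strong separatrix: there $\cs$ along $E_j$ equals $0$ and says nothing about $\cs$ along $\widetilde C$. Even at non-degenerate corners, the reciprocity $\lambda\mapsto 1/\lambda$ combined with $\sum_{q}\cs(\widetilde{\mathcal F},E_{j},q)=E_{j}^{2}$ constrains only sums, not individual indices, so the reciprocals are not controlled. Your bookkeeping therefore cannot close in the very cases that actually occur.

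The paper's proof inverts the roles you assign to the indices: Camacho--Sad is used only to \emph{locate} which separatrices $(C,p)$ must contain (via $\cs(\mathcal F,C,p)=m^2>0$ together with the vanishing of $\cs$ on strong separatrices and on certain separatrices of the nilpotent reduction), while the contradictions come from the integer-valued, resolution-computable invariants you relegate to ``numerical cross-checks.'' Concretely: the Baum--Bott identity $\bb(\mathcal F,p)=(d+2)^2$ eliminates the nilpotent cases where $\bb(\mathcal F,p)=0$ or $2n$; the lower bounds $\gsv(\mathcal F,C,p)\ge d^2+d$ (saddle-node) and $\gsv(\mathcal F,C,p)\ge (d^2+d+2)/2$ (nilpotent, $C$ smooth), obtained by propagating GSV through the explicit reduction trees, contradict $\gsv(\mathcal F,C,p)=m(d+2)-m^2\le\left(\frac{d+2}{2}\right)^2$ for $d\ge3$; and when $C$ is singular at $p$ the Cerveau--Lins Neto formula together with Carnicer's bound $m\le d+2$ forces $\chi(\widetilde C)\ge 4$, which is absurd. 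To salvage your outline you would need to replace the CS bookkeeping on $\mathcal E$ by the corresponding GSV bookkeeping (which does behave rigidly under blow-up) and carry out the explicit case analysis of the saddle-node and nilpotent reductions --- at which point you have reconstructed the paper's argument.
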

  We point out that these result was already obtained in \cite{ClP} when the singularity is a saddle-node. Anyway, for the sake of completeness we also include a proof of this case. The paper is organized as follows. In Section \ref{pre} we recall some basic definitions and results about singular holomorphic foliations with special attention to the indices associated to singularities and separatrices. In Section \ref{nil} we review the classification of nilpotent singularities, their reduction processes and the computation of some of their indices. In Section \ref{pro} we prove Theorem \ref{teorema1}. Finally, in Section \ref{exa} we present several
explicit examples of foliations  with a unique singularity.

 \section{Preliminaries} \label{pre}
 In this section we recall some general definitions and results about singular holomorphic foliations.  Since we focus on what we really 
 need along the paper, we refer the reader to 
 \cite{blibro} and the references therein for more details and proofs. 
 
  Let $\mathcal{F}$ be  a singular holomorphic foliation with isolated singularities on a complex surface. If $p$ is a singular point of
 $\mathcal{F}$ we can associate two main indices to $p$. The first one, called the Milnor number of $\mathcal{F}$ at $p$ and denoted
 by $\mu( \mathcal{F},p)$, is simply
 the Poincaré--Hopf index of any holomorphic vector field defining $\mathcal{F}$ on a neighborhood of $p$. So this index is a non-negative integer number and has a topological nature.
  The other index,  called the Baum--Bott  index of  $\mathcal{F}$ at $p$ and denoted by $\bb(\mathcal{F},p)$, is in general
   a complex number and has no topological meaning. If $\mathcal{F}$ is  defined on $\mathbb{P}^2$ and has degree $d$, we have the following well known formulas:
   \begin{align}\sum_p \mu( \mathcal{F},p) =d^2+d+1\label{Milnorsum}\end{align}
   \begin{align}\label{BaumBottsum}\sum_p \bb(\mathcal{F},p)=(d+2)^2
\end{align} where the summations are taken over the singularities of $\mathcal{F}$.
There are some indices that we can associate to a germ of  curve at $p$ that is invariant by $\mathcal{F}$. A reduced germ 
$C$ of  analytic curve at $p$ is said to be invariant by $\mathcal{F}$ if each connected component of $C\backslash\{p\}$ is contained in a leaf of $\mathcal{F}$; the invariant curve $C$ is called a separatrix of $\mathcal{F}$ if $C$ is irreducible.  If $C$ is a reduced germ
at $p$ which is invariant by $\mathcal{F}$, we are mainly interested in two indices associated to $(\mathcal{F},C,p)$. The first one, called the GSV index and 
denoted by $\gsv(\mathcal{F},C,p)$, is an integer number, while the second one, called the Camacho--Sad index and denoted by 
$\cs(\mathcal{F},C,p)$, is a complex number. If $\mathcal{F}$ is a foliation of degree $d$ on $\mathbb{P}^2$ and $C\subset\mathbb{P}^2$ is a
 reduced algebraic curve of degree $m$  that is invariant by $\mathcal{F}$, 
we have the following formulas:
\begin{align}\label{GSVformula}\sum_{p} \gsv( \mathcal{F},C,p) =m(d+2)-m^2\end{align}
   \begin{align}\label{CSformula}\sum_p \cs(\mathcal{F},C,p)=m^2\end{align}
where the summations are taken over the singularities of $\mathcal{F}$ that are contained in $C$. If $S$ is a
smooth separatrix  of $\mathcal{F}$ 
at $p$, the index $\gsv(\mathcal{F},S,p)$ is simply the Poincaré-Hopf index at $p$ of the restriction to $S$ of a local
holomorphic vector field $v$ defining $\mathcal{F}$:   if $\psi\colon(\mathbb{C},0 )\to (S,p)$ is a
holomorphic  
parametrization, the index $\gsv(\mathcal{F}, S,p)$ is the vanishing order of $\psi^*(v)$ at $0\in\mathbb{C}$.
If $S$ is not smooth and $\psi$ is a Puiseux parametrization, we can also consider the the vanishing order of $\psi^*(v)$ at $0\in\mathbb{C}$, which will be denoted by $\mu(\mathcal{F},S,p)$ and called the multiplicity of $\mathcal{F}$ at $(S,p)$. We remark that in general $\mu(\mathcal{F},S,p)\neq \gsv(\mathcal{F},S,p)$ if the separatrix $S$ is not smooth. If $C\subset\mathbb{P}^2$ is
a reduced algebraic curve of degree $m$ that is invariant by $\mathcal{F}$,  we have that (see \cite{CL})
\begin{align}\label{CLformula}\sum\limits_{B} \mu(\mathcal{F},B,p)=\chi (\tilde{C}) +m(d-1),
\end{align} where $\chi(\tilde{C})$ is the Euler characteristic of the normalization $\tilde{C}$  of $C$ and the summation is taken over all
local branches of $C$ passing through the singularities of $\mathcal{F}$ in $C$. This formula is essentially the same as
that in
\eqref{GSVformula}, but \eqref{CLformula} has the advantage of putting in evidence the topological type of the curve. 

We recall now some facts about the behavior of the indices above under blow-ups. Let $\pi\colon (M,D)\to (\mathbb{C}^2,0)$ be the blow-up at the origin,
were $D$ stands for the associated exceptional divisor $\pi^{-1}(0)$. Suppose that $\mathcal{F}$ is a holomorphic foliation in $(\mathbb{C}^2,0)$ and let $\tilde{\mathcal{F}}$ denote the strict transform of $\mathcal{F}$ by $\pi$. Then, if $\mathcal{F}$  has algebraic multiplicity $\nu_\mathcal{F}$ at the origin and $D$ is invariant by $\tilde{\mathcal{F}}$, we have 
\begin{align}\label{Milnor-blow-up} \mu(\mathcal{F},0)=\sum_p\mu(\tilde{\mathcal{F}},p) +\nu_\mathcal{F}^2-\nu_\mathcal{F}-1
\end{align}
and \begin{align}\label{BB-blow-up} \bb(\mathcal{F},0)=\sum_p\bb(\tilde{\mathcal{F}},p) +\nu_\mathcal{F}^2,
\end{align} where the summations are taken over the singularities of $\tilde{\mathcal{F}}$ in the exceptional divisor $D$.  
Let $S$ be a separatrix of $\mathcal{F}$ and let $\tilde{S}$ be the strict transform of $S$ by $\pi$; the curve $\tilde{S}$ is a separatrix of $\tilde{\mathcal{F}}$ at some singular point $p$ in $D$. Then, if $\nu_S$ is the algebraic multiplicity of $S$ at the origin,  we have 
\begin{align}\label{CS-blow-up} \cs (\mathcal{F},S,0)=\cs(\tilde{\mathcal{F}},\tilde{S},p)+\nu_S^2
\end{align} and,  if $D$ is invariant by $\tilde{\mathcal{F}}$, 
\begin{align} \gsv (\mathcal{F},S,0)=\gsv(\tilde{\mathcal{F}},\tilde{S},p)+\nu_S\nu_\mathcal{F}-\nu_S^2,
\end{align} which becomes \begin{align}\label{GSV-blow-up} \gsv (\mathcal{F},S,0)=\gsv(\tilde{\mathcal{F}},\tilde{S},p)+\nu_\mathcal{F}-1,\end{align}  if $S$ is smooth. 

\subsection*{Reduced singularities}
Consider a holomorphic vector field $X$ defining $\mathcal{F}$ in $(\mathbb{C}^2,0)$. If the origin is a singularity of $\mathcal{F}$, we say that it is non-degenerate if the derivative $dX(0)$ is non-singular. A non-degenerate singularity has Milnor number one, so these singularities are simple in a topological sense. In  an analytical or even formal viewpoint, the simplest singularities are the so called
reduced:  $\mathcal{F}$ is said to be reduced at $0\in\mathbb{C}^2$ if the eigenvalues $\lambda_{1},\lambda_{2}$ of $dX(0)$ satisfy $\lambda_{1}\neq0$ and $\lambda_{2}/\lambda_{1}\not\in\mathbb{Q}_{>0}$. If $\lambda_2\neq 0$ the singularity is clearly non-degenerate; otherwise we say the singularity is a saddle-node. According to the well known reduction theorem for holomorphic foliations --- see \cite{S} or \cite{blibro} --- given a germ of singular holomorphic foliation in $(\mathbb{C}^2,0)$,  after a suitable finite number blow-ups we can obtain a complex surface with a holomorphic foliation whose singularities are all reduced.

A non-degenerate reduced singularity has exactly two separatrices, which are respectively tangent to each eigenspace of $dX(0)$. At a formal level, the same happens with a saddle-node: these singularities have exactly two formal separatrices, which are tangent to each eigenspace. The separatrix that is tangent to the eigenspace associated to the non-zero eigenvalue is called strong and it is always convergent; the other one is called weak and it can be purely formal. Suppose that $\mathcal{F}$ is reduced non-degenerate at $0\in\mathbb{C}^2$ and let $\lambda_1$ and $\lambda_2$ be the eigenvalues of $DX(0)$. Let $S_1$ and $S_2$ be the separatrices associated to the $\lambda_1$--eigenspace and 
the $\lambda_2$--eigenspace, respectively. Then we have 
\begin{align*}&\cs(\mathcal{F},S_1,0)=\frac{\lambda_2}{\lambda_1},\quad \cs(\mathcal{F},S_2,0)=\frac{\lambda_1}{\lambda_2},\\      
  & \gsv(\mathcal{F},S_1,0)= \cs(\mathcal{F},S_2,0)=1,\\
&\bb(\mathcal{F},0)=\frac{\lambda_2}{\lambda_1}+\frac{\lambda_1}{\lambda_2}+2.\end{align*} We point out that the last equality holds even if $\mathcal{F}$ is only non-degenerate. Suppose now that $\mathcal{F}$ is a saddle-node and let $S_s$ and $S_w$ 
be
 its strong and weak separatrices, respectively. We remark that the definitions of  GSV and CS indices  given in \cite{blibro} or \cite{B} work  in the same way for purely formal separatrices. Then, if we set $\mu(\mathcal{F},0)=\mu$,  we have 
\begin{align*}&\cs(\mathcal{F},S_s,0)=0, \quad  \gsv(\mathcal{F},S_s,0)= 1,\\
& \gsv(\mathcal{F},S_w,0)= \mu\ge 2.\end{align*}
On the other hand, the index $\lambda=\cs(\mathcal{F},S_w,0)$ can take any complex value and we have 
\begin{align}\bb(\mathcal{F},0)=2\mu+\lambda. \label{saddle-node-bb}\end{align}

\section{Nilpotent singularities}\label{nilpotent}\label{nil}
A especial kind of singularities of algebraic multiplicity one that are not reduced are the nilpotent ones: those that are generated by  holomorphic vector fields with nilpotent linear part. In this section we recall  some facts about nilpotent singularities and their reduction processes, and we include  computations of  some of their indices. For more details and proofs we refer the reader to \cite{BMS,CM,Me,MS}. In suitable holomorphic coordinates, a nilpotent singularity $\mathcal{F}$ can be expressed as \begin{align} d(y^2 + x^n)+x^pU(x)dy=0,\label{takens} \end{align}
where $n\ge 3$,  $p\ge 2$ and $U (0)\neq 0$. An easy computation shows that $\mu(\mathcal{F},0)=n-1$. We organize our exposition in
three cases.

\subsection* {Case 1: $n<2p$.} 

\subsubsection*{Case 1a.} We assume that $n=2 k+1$ for some $k\in\mathbb{N}$. If we blow up the origin,  the exceptional divisor $D_1$ is invariant and contains a unique singularity, which has algebraic multiplicity two and lies in the strict transform of the curve $\{y=0\}$. We blow up this singularity and obtain a second invariant component $D_2$ of the exceptional divisor such that, besides  a reduced singularity at the intersection  $D_2\cap D_1$,  it contains a unique other singularity which has multiplicity two and  still lies in the strict transform of the curve $\{y=0\}$. If we blow up this singularity of multiplicity two, the situation is quite the same: we obtain a third invariant component $D_3$ of the exceptional divisor such that, besides a reduced singularity at $D_3\cap D_2$, it contains a unique other singularity which has multiplicity two  and lies in the strict transform of the curve $\{y=0\}$. So we blow up this singularity, and so on. 
After the $(k+1)^{\textrm{th}}$ blow-up, the situation changes: the last component $D_{k+1}$ is invariant but has no singularities other than the intersection $D_{k+1}\cap D_k$, which has multiplicity two. A final blow-up at this point produces an invariant component $D_{k+2}$
such that, besides the intersections $p_{k}\colon=D_{k+2}\cap D_{k}$ and $p_{k+1}\colon =D_{k+2}\cap D_{k+1}$, it contains
a unique other singularity $p'$, and the foliation is already reduced.  Thus, if we set $p_{j}=D_j\cap D_{j+1}$ for $j=1,\dots, k-1$, the strict transform foliation $\tilde{\mathcal{F}}$ has singularities at the points $p_1,\dots,p_{k+1}$ and $p'$. Moreover we have 
\begin{align}&\label{inicio}\cs (  \tilde{\mathcal{F}}, D_{j+1}, p_j)=-\frac{j}{j+1}\quad \textrm{ for }  j=1\dots, k-1, \\
&\cs ( \tilde{\mathcal{F}}, D_{k+2}, p_k)=-\frac{2k+1}{k},\\
&\cs (  \tilde{\mathcal{F}}, D_{k+2}, p_{k+1})=-\frac{1}{2},\\ 
&\cs ( \tilde{\mathcal{F}}, D_{k+2}, p')=-\frac{1}{4k+2}\end{align}
and the foliation $\mathcal{F}$ is a generalized curve (see \cite{CLS}). 
Let $\tilde{S}$ be the separatrix of the singularity at $p'$ that is transverse to $D_{k+2}$. This curve is the strict transform of the unique separatrix  $S$ of the foliation $\mathcal{F}$. Since $$\cs(\tilde{\mathcal{F}},\tilde{S},p')=-(4k+2),$$ by successively  applications of \eqref{CS-blow-up} we obtain that $$\cs({\mathcal{F}},{S},0)=0.$$ Thus, since the foliation $\mathcal{F}$ is a generalized curve, the Baum-Bott index coincides with the Camacho-Sad index (see \cite{B}) and therefore 
$$\bb (\mathcal{F},0)=0.$$
\subsubsection*{Case 1b.} We assume $n=2 k$ for some $k\in\mathbb{N}$. In this case the reduction is exactly the same as in \emph{case 1a} until the $(k-1)^\textrm{th}$ blow-up. So after this  blow-up,  besides the reduced singularity at  $p_{k-2}=D_{k-2}\cap D_{k-1}$, we obtain a unique other singularity, which has multiplicity two and lies in the strict transform
of the curve $\{y=0\}$. A final blow-up at this singularity produces an invariant component $D_k$, which contains two singularities $p'$ and $p''$ other than $p_{k-1}=D_ {k-1}\cap D_{k}$, and the foliation is already reduced.  If as above we set $p_{j}=D_j\cap D_{j+1}$ for $j=1,\dots, k-2$, then we have 
\begin{align}\label{cstot}&\cs ( \tilde{\mathcal{F}}, D_{j+1}, p_j)=-\frac{j}{j+1}\quad \textrm{ for } j=1\dots, k-1,\\  &\cs ( \tilde{\mathcal{F}}, D_{k}, p')=\cs ( \tilde{\mathcal{F}}, D_{k}, p'')= -\frac{1}{2k}\nonumber \end{align}
and the foliation is again a generalized curve. As we did in case \emph{case 1a}, we can obtain the $BB$ index of $\mathcal{F}$  by doing the computations with the CS indices, but this time we prefer a more general approach, which will work in next cases too: we apply $k$ times the formula  \eqref{BB-blow-up}. From the equations in \eqref{cstot} 
we easily obtain
\begin{align}\label{bbpj} &\bb(\tilde{\mathcal{F}},p_j)=\frac{1}{j+1}-\frac{1}{j}\quad\textrm{ for } j=1,\ldots,k-1,\\
\nonumber &\bb( \tilde{\mathcal{F}}, p')=\bb(  \tilde{\mathcal{F}},  p'')= -2k-\frac{1}{2k}+2.\end{align} 
Besides the singularity $(\mathcal{F},0)$, which has algebraic multiplicity one, each one of the next $k-1$ singularities from
which the components $D_2,\ldots,D_k$ arise have algebraic multiplicity two. So the sum of squares of this $k$ algebraic multiplicities is equal to $4k-3$. Then, after $k$ successively applications of  \eqref{BB-blow-up} we obtain  
\begin{align} \nonumber BB(\mathcal{{F}},0)&=\sum_{j=1}^{k-1}\bb(\tilde{\mathcal{F}},p_j)+ \bb (\tilde{\mathcal{F}},p')+\bb (\tilde{\mathcal{F}},p'')+ 4k-3 \\  \nonumber &=\sum_{j=1}^{k-1}\left(\frac{1}{j+1}-\frac{1}{j}\right)+ \left(-2k-\frac{1}{2k}+2\right)+\\
&\hspace{5cm}+\left(-2k-\frac{1}{2k}+2\right)+4k-3\nonumber \\ 
&=0.\label{bbcase2a}\end{align}

\subsection* {Case 2: $n=2p$.} 
If we set $k=p$, the situation 
is exactly the same as in \emph{case 1} until the $(k-1)^\textrm{th}$ blow-up. After this  blow-up,  besides the reduced singularity at  $p_{k-2}=D_{k-2}\cap D_{k-1}$, we obtain a unique other singularity, which has multiplicity two and lies in the strict transform
of the curve $\{y=0\}$. We blow up this singularity and obtain an invariant component $D_k$ of the exceptional divisor. As in previous case, if we set $p_{j}=D_j\cap D_{j+1}$ for $j=1,\dots, k-1$, 
Equation \eqref{bbpj} also holds true, that is 
\begin{align}\bb(\tilde{\mathcal{F}},p_j)=\frac{1}{j+1}-\frac{1}{j} \quad \textrm{ for  } j=1,\ldots,k-1.\end{align}  In relation with the other singularities in $D_k$, we  find the following two cases.\\

\subsubsection*{Case 2a} Besides $p_{k-1}=D_ {k-1}\cap D_{k}$, there are two other singularities   $p'$ and $p''$   in $D_k$, which are non-degenerate. Set  
\begin{align*} \lambda'=\cs (\tilde{\mathcal{F}},D_k,p')\quad \textrm{and} \quad 
\lambda''=\cs (\tilde{\mathcal{F}},D_k,p'').\end{align*} By Camacho-Sad formula,
$$\lambda'+\lambda''=-\frac{1}{k},$$ so that at least one of this numbers, say $\lambda'$, 
is negative and $p'$ is therefore reduced.  If $\lambda''\notin\mathbb{Q}^+$, the singularity 
at $p''$ is also reduced. If  $\lambda''\in\mathbb{Q}^+$, we need some additional blow-ups to achieve the reduction. After doing this we obtain a dicritical component or a saddle-node, so  the foliation $\mathcal{F}$ is not necessarily a generalized curve. Anyway, since  
\begin{align*} \bb (\tilde{\mathcal{F}},p')=\lambda'+\frac{1}{\lambda'}+2\quad \textrm{and} \quad 
\bb (\tilde{\mathcal{F}},p'')=\lambda''+\frac{1}{\lambda''}+2,
\end{align*} as in previous case we can apply $k$ times the formula  \eqref{BB-blow-up} to compute the Baum-Bott index of $\mathcal{F}$: 
\begin{align} \nonumber BB(\mathcal{{F}},0)&=\sum_{j=1}^{k-1}\bb(\tilde{\mathcal{F}},p_j)+ \bb (\tilde{\mathcal{F}},p')+\bb (\tilde{\mathcal{F}},p'')+ 4k-3 \\  \nonumber &=\sum_{j=1}^{k-1}\left(\frac{1}{j+1}-\frac{1}{j}\right)+ \left(\lambda'+\frac{1}{\lambda'}+2\right)+
\left(\lambda''+\frac{1}{\lambda''}+2\right)+4k-3\\ \nonumber
&=\left(\frac{1}{k}-1\right)+\lambda'+\lambda''+\frac{\lambda'+\lambda''}{\lambda'\lambda''}+4k+1\\
&=\left(\frac{1}{k}\right)-\frac{1}{k}-\frac{1}{k\lambda'\lambda''}+4k\nonumber\\
&=2n-\frac{2}{n\lambda'\lambda''}.\label{bbcase2a}\end{align}
Since $p'$ is reduced, there exists a unique separatrix $\tilde{S'}$ through $p'$ that is transverse to the exceptional divisor. This separatrix is the strict transform of a smooth separatrix $S'$ of $\mathcal{F}$.  We can use Equation \eqref{GSV-blow-up} to compute the $\gsv$ index of $S'$. If we successively blow down the components $D_k, D_{k-1},\dots, D_1$, we successively  obtain $k$ singularities such that the first $k-1$ of them have multiplicity two and the last one --- which is $(\mathcal{F},0)$ --- has multiplicity one. Thus, by applying $k$ times Equation \eqref{GSV-blow-up} we obtain  
\begin{align}\label{gsvs'}\gsv(\mathcal{F},S',0)&=\gsv(\tilde{\mathcal{F}},\tilde{S'},p')+(k-1) \\&=\nonumber1+(k-1)\\
&=\frac{n}{2}.\label{n/2}
\end{align}In the same way,  if $p''$ is reduced, there is a smooth separatrix $S''$ of $\mathcal{F}$ whose strict transform 
pass through $p''$ and proceeding as above we also obtain
\begin{align}\label{n/2'}\gsv(\mathcal{F},S'',0)=\frac{n}{2}.
\end{align}

\subsubsection*{Case 2b} Besides $p_{k-1}=D_ {k-1}\cap D_{k}$, there is exactly one other singularity $p'$ in $D_k$. This singularity is a saddle-node of Milnor number  two, whose  weak separatrix is contained in $D_k$. By Camacho-Sad formula we have $$\cs(\tilde{\mathcal{F}},D_k,p')=-\frac{1}{k},$$ so it follows from
\eqref{saddle-node-bb} that 
$$\bb(\tilde{\mathcal{F}},p')=4-\frac{1}{k}.$$ Proceeding as in previous case,
\begin{align} \nonumber \bb({\mathcal{F}},0)&=\sum_{j=1}^{k-1}\bb(\tilde{\mathcal{F}},p_j)+ \bb (\tilde{\mathcal{F}},p') + 4k-3 \\  \nonumber &=\left(\frac{1}{k}-1\right)+
 \left(4-\frac{1}{k}\right)+4k-3=4k\\ 
&=2n.\label{bbcase2b}\end{align}

\subsection* {Case 3: $n>2p$.} 
Again, if we set $k=p$, the reduction process 
is exactly the same as in previous case until the $(k-1)^\textrm{th}$ blow-up. After this  blow-up,  besides the reduced singularity at  $p_{k-2}=D_{k-2}\cap D_{k-1}$ we obtain a unique other singularity, which has multiplicity two and lies in the strict transform
of the curve $\{y=0\}$. We blow up this singularity and the reduction is finished: the last component $D_k$ of the
exceptional divisor is invariant and contains two singularities   $p'$ and $p''$   other than $p_{k-1}= D_{k-1}\cap D_k$.  The singularity at
$p'$ is resonant non-degenerate  and we have 
\begin{align}\label{cs=0}\cs(\tilde{\mathcal{F}},D_k,p')=-\frac{1}{k}.\end{align} The singularity at $p''$ is a saddle-node whose strong separatrix is contained in $D_k$; in particular $$\cs(\tilde{\mathcal{F}},D_k,p'')=0.$$ 
In relation with the other singularities, 
Equation \eqref{inicio} also holds true. Let $\tilde{S'}$ be  the separatrix through $p'$  that is transverse to $D_k$ and let $S'$ denote 
 the separatrix of $\mathcal{F}$ defined 
by $\tilde{S'}$; the separatrix $S'$ is smooth. 
From Equation \eqref{cs=0} we see that
$\cs(\tilde{\mathcal{F}},\tilde{S'},p')=-k$, so by applying $k$ times Equation \eqref{CS-blow-up} we obtain
\begin{align}\label{cscase3}\cs(\mathcal{F},S',0)&=\cs(\tilde{\mathcal{F}},\tilde{S'},p')+k\\
&=0.\nonumber\end{align} In relation with the $\gsv$ index, 
 proceeding as in \emph{Case 2b} we arrive to the same equation as that in \eqref{gsvs'}, that is
\begin{align}\gsv(\mathcal{F},S',0)&=\gsv(\tilde{\mathcal{F}},\tilde{S'},p')+(k-1)\nonumber\\
&=k.\label{gsvk0} \end{align}
Suppose  the weak separatrix $\tilde{S''}$ of the saddle-node at $p''$ is convergent. Then, proceeding in the same way as above we obtain 
\begin{align}\gsv(\mathcal{F},S'',0)=\gsv(\tilde{\mathcal{F}},\tilde{S''},p')+(k-1),
\end{align} but this time we have $\gsv(\tilde{\mathcal{F}},\tilde{S''},p')=\mu$, where $\mu\ge 2$ is the Milnor number of the saddle-node at $p''$. Therefore  
 \begin{align}\label{gsvk}\gsv(\mathcal{F},S'',0)=\mu+k-1.
\end{align} Since  $\mu(\mathcal{F},0)=n-1$, we can use $k$ times the formula  \eqref{Milnor-blow-up}   to relate $k$,  $\mu$ and  $n$, whence we obtain --- we left the details to the reader ---
\begin{align} \label{n,k} n=\mu+2k-1.\end{align}

\section{Proof of Theorem \ref{teorema1}}\label{pro}
Let $\mathcal{F}$ be a holomorphic foliation of degree $d$  on $\mathbb{P}^2$ with only one singularity. By \eqref{Milnorsum} the singularity of $\mathcal{F}$ has Milnor number equal to $d^2+d+1$. We recall that a singularity whose algebraic multiplicity is equal to one necessarily belongs to some of the following types:
\begin{enumerate}
\item The singularity is non-degenerate; so its Milnor number is equal to one.
\item The singularity is a saddle-node.
\item The singularity is nilpotent.
\end{enumerate} Since $d^2+d+1>1$, only the two last cases are possible. Thus, we reduce the proof of Theorem  \ref{teorema1} 
to proving the propositions \ref{saddle-node-case} and \ref{nilpotent-case} below. It is known  that saddle-nodes or  nilpotent singularities can only appear if $d\ge 2$. Thus, since Theorem \ref{teorema1} is 
already proved for $d=2$ in \cite{CD},  we perform the proofs of  propositions \ref{saddle-node-case} and \ref{nilpotent-case} under the assumption that  $d\ge 3$. 

\begin{pro}\label{saddle-node-case}Let $\mathcal{F}$ be a holomorphic foliation of degree $d$  on $\mathbb{P}^2$ with only one singularity. If this singularity is a saddle-node, then $\mathcal{F}$ has no invariant algebraic curve. 
\end{pro}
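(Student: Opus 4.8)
The plan is to argue by contradiction. Assume $\mathcal{F}$ carries an invariant algebraic curve; replacing it by its reduction we may take it reduced, say $C\subset\mathbb{P}^2$ of degree $m\ge 1$, and let $p$ be the unique singularity of $\mathcal{F}$. First I would record what the hypotheses force. By \eqref{Milnorsum}, $\mu(\mathcal{F},p)=d^2+d+1$. Since $C$ is invariant, it is smooth at every regular point of $\mathcal{F}$, so $\sing(C)\subseteq\{p\}$. Applying \eqref{CSformula} to $C$: as $p$ is the only singularity of $\mathcal{F}$, the sum there reduces to one term, so $p\in C$ (otherwise $m^2=0$) and $\cs(\mathcal{F},C,p)=m^2$. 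Now the germ $(C,p)$ is a union of branches, each of which is a separatrix of the saddle-node $(\mathcal{F},p)$; a saddle-node has exactly the two formal separatrices — the strong $S_s$ (convergent and smooth) and the weak $S_w$ (smooth, possibly purely formal) — so, $C$ being reduced, the germ $(C,p)$ is one of $S_s$, $S_w$ (possible only if $S_w$ is convergent), or $S_s\cup S_w$. I would rule out each case.

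If $(C,p)=S_s$: then $\cs(\mathcal{F},C,p)=\cs(\mathcal{F},S_s,p)=0$, contradicting $\cs(\mathcal{F},C,p)=m^2>0$. If $(C,p)=S_w$: then $\gsv(\mathcal{F},C,p)=\gsv(\mathcal{F},S_w,p)=\mu(\mathcal{F},p)=d^2+d+1$, and \eqref{GSVformula} becomes $m^2-(d+2)m+(d^2+d+1)=0$, whose discriminant is $-3d^2<0$; no such $m$ exists. The genuine case is $(C,p)=S_s\cup S_w$, which I would attack with \eqref{CLformula}. The two branches are smooth, and they are transverse — the linear part of a saddle-node has two distinct eigenlines, to which $S_s$ and $S_w$ are respectively tangent — so $(S_s\cdot S_w)_p=1$; moreover $\mu(\mathcal{F},B,p)=\gsv(\mathcal{F},B,p)$ for a smooth branch $B$, hence the branch sum in \eqref{CLformula} is $\mu(\mathcal{F},S_s,p)+\mu(\mathcal{F},S_w,p)=1+(d^2+d+1)=d^2+d+2$.

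To close this last case I would split on the reducibility of $C$. If $C$ is irreducible: its only singular point is the ordinary node at $p$, so by adjunction $\chi(\tilde{C})=3m-m^2+2$; substituting into \eqref{CLformula} gives $m^2-(d+2)m+(d^2+d)=0$ with discriminant $4-3d^2<0$, impossible since $d\ge 3$. If $C=C_1\cup C_2$ with $C_1,C_2$ irreducible: each $C_i$ has a single smooth branch at $p$ and is smooth elsewhere, hence smooth, and $C_1\cap C_2\subseteq\sing(C)=\{p\}$, so by Bézout $(\deg C_1)(\deg C_2)=(C_1\cdot C_2)_p=(S_s\cdot S_w)_p=1$; thus $C_1,C_2$ are lines, $m=2$, $\tilde{C}=\mathbb{P}^1\sqcup\mathbb{P}^1$, and \eqref{CLformula} reads $d^2+d+2=4+2(d-1)$, i.e. $d(d-1)=0$, again impossible since $d\ge 3$. (There is no third component: each invariant component passes through $p$ and contributes a distinct branch there, and only two separatrices are available.) This exhausts the cases.

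The step I expect to be the main obstacle is the case $(C,p)=S_s\cup S_w$: the other two are killed by a single local identity, but here a purely local computation does not suffice — one must combine the topological count \eqref{CLformula} with the global input coming from Bézout together with $\sing(C)=\{p\}$. A secondary point that needs care is the justification that the two separatrices of a saddle-node are smooth and transverse, which is what keeps the Euler-characteristic bookkeeping clean; I would take this from the normal form of a saddle-node, in which the two separatrices are the coordinate axes. (One can also dispatch the union case in one line via additivity of the $\gsv$ index over branches, namely $\gsv(\mathcal{F},S_s\cup S_w,p)=\gsv(\mathcal{F},S_s,p)+\gsv(\mathcal{F},S_w,p)-2(S_s\cdot S_w)_p=d^2+d$, fed into \eqref{GSVformula}; I would prefer the route above since it uses only the formulas recalled earlier.)
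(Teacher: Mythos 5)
Your proposal is correct, and its overall skeleton (rule out $(C,p)=S_s$ by $\cs(\mathcal{F},S_s,p)=0<m^2$, then treat $(C,p)=S_w$ and $(C,p)=S_s\cup S_w$) matches the paper's. The difference is in the union case: the paper stays with the GSV index throughout, using additivity over branches, $\gsv(\mathcal{F},S_s\cup S_w,p)=\gsv(\mathcal{F},S_s,p)+\gsv(\mathcal{F},S_w,p)-2(S_w,S_s)_p=d^2+d$, and then concludes uniformly in both cases from $m(d+2)-m^2\le\left(\tfrac{d+2}{2}\right)^2<d^2+d$ --- exactly the one-line alternative you mention in your closing parenthesis. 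You instead invoke \eqref{CLformula}, which forces a further split into ``$C$ irreducible with a node at $p$'' (handled by adjunction) and ``$C=C_1\cup C_2$'' (handled by B\'ezout plus $\sing(C)=\{p\}$). Both routes are sound; yours is the heavier one here but is essentially the strategy the paper itself deploys for the non-smooth curve in the nilpotent case (Proposition \ref{nilpotent-case}), where the Euler-characteristic bookkeeping of \eqref{CLformula} genuinely pays off, whereas for the saddle-node the GSV route closes the case without any global geometry beyond \eqref{GSVformula}. Your discriminant computations ($-3d^2<0$ and $4-3d^2<0$) are correct and in fact slightly sharper than the paper's bound, which only needs $d\ge 3$; and your justification that $S_s$ and $S_w$ are smooth and transverse (distinct eigenlines of $dX(0)$) is the same fact the paper uses implicitly when it writes $(S_w,S_s)_p=1$.
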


\begin{proof} Suppose that there exists an algebraic curve $C$ of degree $m$ that is invariant by $\mathcal{F}$. Let $p\in\mathbb{P}^2$ be the singularity of $\mathcal{F}$. Let $S_w$ and $S_s$ be the weak and
strong separatrices of $p$, respectively.  From \eqref{CSformula} we have 
$$\operatorname{CS}(\mathcal{F},C,p)=m^2>0.$$ Thus, since that $\operatorname{CS}(\mathcal{F},S_s,p)=0$, the germ of $C$ at $p$ only admits the following two possibilities:
\begin{enumerate}
\item $(C,p)=S_w$; in this case  $$\gsv(\mathcal{F},C,p)=\mu(\mathcal{F},p)=d^2+d+1.$$
\item  $(C,p)=S_w\cup S_s$; in this case \begin{align*}\gsv(\mathcal{F},C,p)&=\gsv(\mathcal{F},S_w,p)+
\gsv(\mathcal{F},S_s,p)-2(S_w,S_s)_p\\ &=\mu(\mathcal{F},p) +1 -2 (1)
=d^2+d.\end{align*}
\end{enumerate}So in any case we have   \begin{align}\label{gsvd2}\gsv(\mathcal{F},C,p)\ge d^2+d.\end{align}  
On the other hand, by \eqref{GSVformula} we have 
\begin{align*}  \gsv(\mathcal{F},C,p)&=m(d+2)-m^2=m(d+2-m)\\
 &\le\left(\frac{d+2}{2}\right)^2.\end{align*} This inequality together with \eqref{gsvd2} give us 
$$d^2+d\le \left(\frac{d+2}{2}\right)^2,$$ which is impossible for  $d\ge 3$. 

 \end{proof}

\begin{pro}\label{nilpotent-case}Let $\mathcal{F}$ be a holomorphic foliation of degree $d$  on $\mathbb{P}^2$ with only one singularity. If this singularity is nilpotent, then $\mathcal{F}$ has no invariant algebraic curve. 
\end{pro}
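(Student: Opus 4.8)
The plan is to mimic the structure of the proof of Proposition \ref{saddle-node-case}: assume an invariant algebraic curve $C$ of degree $m$ exists, locate its germ at the unique singular point $p$ among the separatrices of the nilpotent singularity, and derive a contradiction by comparing the global formulas \eqref{GSVformula} or \eqref{CLformula} and \eqref{CSformula} with the local index computations carried out in Section \ref{nil}. Since $p$ is nilpotent we have $\mu(\mathcal{F},p)=n-1=d^2+d+1$, so $n=d^2+d+2$. The germ $(C,p)$ must be a union of branches each of which is a separatrix of $\mathcal{F}$, and in the Takens form \eqref{takens} a nilpotent singularity has essentially one convergent separatrix in Cases 1 and 3 and possibly one or two smooth separatrices $S',S''$ in Case 2 (plus the weak saddle-node separatrix in Case 3, which may be purely formal and hence unavailable as a branch of an algebraic curve). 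So the case analysis of Section \ref{nil} becomes a case analysis on the possible germs $(C,p)$.

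The key steps, in order: First, I would record that $m\ge 1$ and, from \eqref{CSformula}, $\sum_{\text{branches}}\cs(\mathcal{F},C,p)=m^2>0$; this rules out any germ all of whose branches have non-positive CS index. In Cases 1a, 1b, and 3 the (essentially unique convergent) separatrix $S$ has $\cs(\mathcal{F},S,0)=0$ — and in Case 3 the convergent strong separatrix $S''$ of the saddle-node also has $\cs=0$ — so in those cases no invariant algebraic curve can exist at all; this disposes of Cases 1a, 1b, and 3 immediately, modulo checking that the weak separatrix in Case 3 cannot be algebraic (if it were convergent, combine its $\cs$ value with that of $S'$ via Camacho--Sad, but more simply: an algebraic curve is convergent, so one would then be in a sub-case where the formal weak separatrix happens to converge, and I would need the value $\cs(\mathcal{F},S_w,0)$, obtainable by blowing down from the saddle-node's weak CS index $\lambda$ through \eqref{CS-blow-up}; one checks this still cannot sum to $m^2>0$ together with the other available branches). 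Second, for Case 2 I would use the GSV side instead: the smooth separatrices have $\gsv(\mathcal{F},S',0)=\gsv(\mathcal{F},S'',0)=n/2$ by \eqref{n/2}, \eqref{n/2'}, while the weak saddle-node separatrix in Case 2b is formal (hence not a branch of $C$). So $(C,p)$ is a sub-union of $\{S',S''\}$, giving $\gsv(\mathcal{F},C,p)\le n/2+n/2=n = d^2+d+2$ (subtracting the non-negative intersection term $2(S',S'')_p$ if both appear). Third, confront this with the global bound from \eqref{GSVformula}: $\gsv(\mathcal{F},C,p)=m(d+2-m)\le\big(\tfrac{d+2}{2}\big)^2$, which is compatible with $n=d^2+d+2$, so the crude bound is not enough here — and that is the main obstacle. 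To beat it I would bring in \eqref{CSformula} as a second constraint: with $(C,p)\subseteq\{S',S''\}$, $m^2=\cs(\mathcal{F},C,p)$ which, by \eqref{CS-blow-up} applied $k$ times to $\cs(\tilde{\mathcal{F}},\tilde{S'},p')$ (and similarly for $S''$), equals a sum of terms of the form $(\text{small or negative local }\cs)+k^2$-type corrections; since $k=n/2=(d^2+d+2)/2$ is large compared to any admissible $m\le d+1$, one gets $m^2$ forced to be far larger than $(d+1)^2$, a contradiction. Equivalently, and more cleanly, I would note that $m(d+2-m)=\gsv(\mathcal{F},C,p)\le n$ forces $m\le$ something like $2$ or $3$ when $n$ is compared correctly, while $\cs$ forces $m^2$ to be of size $\sim k^2\sim n^2/4$, and these are inconsistent for $d\ge 3$.

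A cleaner route for Case 2 that I would try to make work: use \eqref{CLformula} with the multiplicity index $\mu(\mathcal{F},B,p)$. Since the only candidate branches $S',S''$ are smooth, $\mu=\gsv$ on them, so $\sum_B\mu(\mathcal{F},B,p)=\gsv(\mathcal{F},C,p)\le n=d^2+d+2$; but \eqref{CLformula} says this sum equals $\chi(\tilde C)+m(d-1)$. If $C$ is irreducible of degree $m$ then $\chi(\tilde C)=2-2g\le 2$, so $m(d-1)\ge \sum_B\mu-2 = (\text{at least } n/2)-2$, giving $m\ge \frac{n/2-2}{d-1}=\frac{(d^2+d+2)/2-2}{d-1}=\frac{d^2+d-2}{2(d-1)}=\frac{(d+2)(d-1)}{2(d-1)}=\frac{d+2}{2}$; combined with $m(d+2-m)\ge n/2$ — wait, one needs the lower bound on $\gsv$, namely $\gsv(\mathcal F,C,p)\ge n/2$ since at least one smooth separatrix is used and $n/2\ge$ the subtracted intersection only if a single branch — I would instead simply observe $\gsv(\mathcal F, C,p)=m(d+2-m)$ is at most $\lfloor(d+2)^2/4\rfloor$ yet must be $\ge n/2 - 0$ is automatic; so the real leverage is the CS computation. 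Thus I expect the decisive inequality to come from pairing \eqref{CSformula} with the blow-down formula \eqref{CS-blow-up}: tracking that $\cs(\mathcal{F},S',0)$ lies in a bounded range (it is $0$ in Case 3 and can be computed to be small — not of order $k^2$ — in Case 2 because the local $\cs$ at $p'$ is $-k$ and \eqref{CS-blow-up} adds $\nu_{S}^2=1$ at each of the $k$ steps, giving $\cs(\mathcal F,S',0)=-k+k=0$ there too), whence $m^2=\cs(\mathcal F,C,p)$ is bounded by a constant while $m\ge (d+2)/2$, impossible for $d\ge 3$. The main obstacle, then, is verifying that $\cs(\mathcal{F},S',0)$ and $\cs(\mathcal{F},S'',0)$ are both $0$ (or at least small) in Case 2 — the analogue of the Case-3 computation \eqref{cscase3} — and handling the bookkeeping when $C$ is reducible or when both smooth separatrices appear as branches; once those local CS values are pinned down, the contradiction with $m^2 = \cs(\mathcal{F},C,p) > 0$ is immediate since a sum of zeros (plus the intersection correction, which by \eqref{CS-blow-up}-type reasoning is also controlled) cannot equal the positive integer $m^2$.
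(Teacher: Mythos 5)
Your overall plan (locate the germ $(C,p)$ among the separatrices computed in Section \ref{nil} and contradict the global index formulas) is the right one, but the proposal has two genuine gaps, and both stem from not using the Baum--Bott formula \eqref{BaumBottsum}. The paper's first and decisive step is to compute $\bb(\mathcal{F},p)=(d+2)^2>0$ and compare it with the values $0$, $2n$ and $2n-\tfrac{2}{n\lambda'\lambda''}$ obtained in Section \ref{nil}; this eliminates Case 1 and Case 2b outright and, crucially, forces $\lambda'\lambda''>0$ in Case 2a, i.e.\ forces $p'$ and $p''$ to be \emph{already reduced}. Without that last point your assertion that ``$(C,p)$ is a sub-union of $\{S',S''\}$'' in Case 2 is unjustified: if $\lambda''\in\mathbb{Q}^+$ the reduction continues and may produce a dicritical component, after which $(C,p)$ need not be contained in $S'\cup S''$ at all. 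Your substitute arguments also fail where you lean on Camacho--Sad: in Case 3 the branch $S''$ comes from the \emph{weak} separatrix of the saddle-node at $p''$, whose local index $\lambda=\cs(\tilde{\mathcal{F}},\tilde{S''},p'')$ can be any complex number, so $\cs(\mathcal{F},S'',0)=\lambda+k$ is not controlled and CS cannot ``dispose of Case 3 immediately'' (indeed the paper does not dispose of it --- Case 3 survives as case (ii) and is handled with GSV). Likewise your claim that $\cs(\mathcal{F},S',0)=0$ in Case 2a is incorrect: there one only knows $\lambda'+\lambda''=-1/k$, not $\lambda'=-1/k$, so $\cs(\tilde{\mathcal{F}},\tilde{S'},p')=1/\lambda'$ is not $-k$ and the blow-down computation you propose does not yield a bounded quantity. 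So the ``decisive inequality'' you want from pairing \eqref{CSformula} with \eqref{CS-blow-up} is not available.

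The second gap is the reducible (non-smooth) case, which is exactly where you acknowledge the crude GSV bound $\gsv(\mathcal{F},C,p)\le n$ is compatible with $m(d+2-m)\le\left(\tfrac{d+2}{2}\right)^2$. The paper closes this case not with CS but with the Cerveau--Lins Neto formula \eqref{CLformula}: since $S'$ and $S''$ are smooth the left-hand side equals $\gsv(\mathcal{F},S',p)+\gsv(\mathcal{F},S'',p)=n$ in both surviving cases, and Carnicer's bound $m\le d+2$ (valid because $\mathcal{F}$ is non-dicritical) then gives $\chi(\tilde{C})=n-m(d-1)\ge(d^2+d+2)-(d+2)(d-1)=4$, impossible for a compact Riemann surface. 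You start down this road in your ``cleaner route'' but abandon it; the missing ingredient is precisely the upper bound $m\le d+2$ from \cite{carnicer}, not a CS estimate. (Your treatment of the smooth case via $\gsv=n/2$ and \eqref{GSVformula} is essentially the paper's and is fine, modulo the reduction issues above; also note that in Case 2b it is the weak separatrix that lies in $D_k$, so the transverse separatrix is the convergent strong one --- the opposite of what you assert.)
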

\begin{proof} Let $p\in\mathbb{P}^2$ be the singularity
of $\mathcal{F}$. As we have seen in Section \ref{nilpotent}, in suitable holomorphic coordinates the singularity at $p$ has the expression \eqref{takens}.  Then we have $\mu(\mathcal{F},p)=n-1$ and therefore   $$n=d^2+d+2.$$  By \eqref{BaumBottsum} we have 
$$\bb(\mathcal{F},p)=(d+2)^2>0. $$ Then it is obvious that  $p$ can not correspond to 
{Case 1} of Section \ref{nilpotent}. Neither can $p$ correspond to Case 2b  of Section \ref{nilpotent}: otherwise 
\begin{align*}(d+2)^2&=2n\\
&=2(d^2+d+2),\end{align*} which is impossible for $d\ge 3$.  Then we have the following two possibilities:
\begin{enumerate}[(i)]
\item\label{i} The singularity at $p$ corresponds to case 2a of Section \ref{nilpotent}. Furthermore,
  both non-degenerate singularities $p'$ and $p''$ 
appearing after the $k^{\textrm{th}}$ blow-up   are already reduced: otherwise, we have $\lambda'\lambda''<0$ and using this together with \eqref{bbcase2a} we obtain
\begin{align*}(d+2)^2&=\bb(\mathcal{F},q)=2n-\frac{2}{n\lambda'\lambda''}\\
&>2(d^2+d+2),
\end{align*}  which is impossible for $d\ge 3$. 
\item \label{ii}The singularity at $p$ corresponds to case 3 of Section \ref{nilpotent}.  The reduction is the same as in  previous case, 
but only $p'$ is non-degenerate while  $p''$ is a  saddle-node.

\end{enumerate}
Let $\tilde{S'}$ and $\tilde{S''}$ be respectively the separatrices of $p'$ and $p''$ that are transverse to the exceptional divisor; the curve $\tilde{S''}$ could be purely formal if we are in case \eqref{ii}. Let $S'$ and $S''$ be the separatrices of $(\mathcal{F},p)$ defined by $\tilde{S'}$ and $\tilde{S''}$, respectively. Let $C\subset \mathbb{P}^2$ be an algebraic curve of degree $m$ which is invariant by $\mathcal{F}$.  \\

Suppose first that $C$ is smooth. Then  the germ $(C,p)$ 
 of $C$ at $p$ is equal to
 $S'$ or $S''$. If we are in case \eqref{i}, we have from \eqref{n/2} and \eqref{n/2'} that 
 \begin{align}\label{n21}\cs(\mathcal{F},C,p)=\frac{n}{2}.
 \end{align}
 If we are in case \eqref{ii}, by \eqref{cscase3} we have  $\cs(\mathcal{F},S',p)=0$. Then, since Camacho-Sad formula
 \eqref{CSformula} gives $\cs(\mathcal{F},C,p)=m^2>0$, we conclude  that $(C,p)=S''$. Thus, it follows from equations \eqref{gsvk} and \eqref{n,k} 
 that \begin{align}\label{n22}\gsv(\mathcal{F},C,p)\ge \frac{n}{2}.\end{align} From \eqref{n21} and \eqref{n22} in any case we have  \begin{align}\label{n23}\gsv(\mathcal{F},C,p)\ge \frac{d^2+d+2}{2}.\end{align}
 Since $p$ is the only singularity of $\mathcal{F}$, by Equation \eqref{GSVformula} we have
\begin{align*}\gsv (\mathcal{F}, C,p)&=m(d+2)-m^2\\
&=m(d+2-m)\le \left(\frac{d+2}{2}\right)^2.
\end{align*} Combining this inequality with that in \eqref{n23} we obtain $$\frac{d^2+d+2}{2}\le \left(\frac{d+2}{2}\right)^2,$$ which is impossible for $d\ge 3$.  \\

Suppose now that $C$ is not smooth. Then we have $(C,p)=S'\cup S''$. Since the branches $S'$ and $S''$ are smooth, we have 
$$\gsv (\mathcal{F}, S', p)=\mu(\mathcal{F},S',p),\quad \gsv (\mathcal{F}, S'',p)=\mu(\mathcal{F},S'',p)$$
and by Equation \eqref{CLformula} we obtain
\begin{align}\label{cln}\gsv (\mathcal{F}, S', p)+\gsv (\mathcal{F}, S'',p)=\chi (\tilde{C})+m(d-1).\end{align} It follows from Equation \eqref{n21} in case \eqref{i} and
Equations \eqref{gsvk0}, \eqref{gsvk} and \eqref{n,k} in case \eqref{ii} that the left side of \eqref{cln} is equal to $n$. 
Moreover, since $\mathcal{F}$ is non-dicritical,  by \cite{carnicer} we have  $m\le d+2$. Therefore 
 \begin{align*}\chi (\tilde{C})&= n-m(d-1)\ge (d^2+d+2)-(d+2)(d-1)=4,\end{align*} which is impossible for a compact Riemann surface. 
 \end{proof}

  \section{Examples}\label{exa}
  In this section we present several examples of foliations of degree 3 of   $\mathbb{P}^2$
  with a unique singularity. 
  \subsection{Saddle-node examples}

Let $\mathcal{F}$ be a degree 3 foliation of  $\mathbb{P}^2$ with a unique singularity, which is supposed to be a saddle-node.  On suitable affine coordinates $\mathcal{F}$ is  given by the 1-form
\begin{equation*}\begin{aligned}\label{eqsillas}
(x+A_2(x,y)+A_3(x,y)-x \phi(x,y))dy+(B_2+B_3+y\phi(x,y))dx, 
\end{aligned}
 \end{equation*}
where
$A_2=a_{20}x^2+a_{11}xy+a_{02}y^2$, $B_2=b_{20}x^2+b_{11}xy+b_{02}y^2$, $A_3=a_{30}x^3+a_{21}x^2y+a_{12}xy^2+a_{03}y^3$ , $B_3=b_{30}x^3+b_{21}x^2y+b_{12}xy^2+b_{03}y^3$ and
 $\phi=p_{30}x^3+p_{21}x^2y+p_{12}xy^2+p_{03}y^3$.
Now we set  $B_3 \equiv 0$. Then it is not difficult to find  a projective change of coordinates such that 
  the foliation $\mathcal{F}$ is defined by 
  the 1-form 
  \begin{equation}\label{eqoms}
\omega= \big[x+A_2(x,y)+xA_3^2(x,y)-x \phi(x,y)\big]dy+\left[x^2+y\phi(x,y)\right]dx, 
\end{equation}
where $A_j$ and $\phi$ are  homogenous polynomials of degree $j$ and $3$ respectively.
Then we have the following result.
 \begin{pro}\label{propro}
Let $\mathcal{F}$ be a singular holomorphic foliation on $\mathbb{P}^2$ of degree $3$, given by a 1-form as in 
\eqref{eqoms}.
Then the saddle-node at the origin is the only singularity of  $\mathcal{F}$  if and only if  $\omega$ is one of the following 1-forms:
\begin{enumerate}
 \item $\omega_a=(x+ay^2-ax^2y-x\phi(x,y))dy+(x^2+y\phi(x,y)) dx,$ where $\phi=-4ax^3-a^2y^3$ for some $a \in \mathbb{C}^*$. The foliations defined by these 1-forms are pairwise equivalent by transformations of the form $$(x,y)\mapsto(\lambda x, \lambda^2 y),\quad\lambda\in\mathbb{C}^*.$$  
\item $\omega_{a,c}=(x+A_2(x,y)+xA_3^2(x,y)-x \phi(x,y))dy+(x^2+y\phi(x,y))dx$ with
\begin{align*}
A_2(x,y)&=\dfrac{5c}{a} x^2 + \dfrac{3c^2}{a^2} x y + a y^2,\\
A_3^2(x,y)&=\dfrac{15c^2}{a^2}x^3+14ax^2y+cxy^2,\\
\phi(x,y)&=-64 a x^3 - 35 c x^2 y - \dfrac{18 a^3}{c} x y^2 - a^2 y^3,
\end{align*}
where $a\in\mathbb{C}^*$ and $c^3=3a^4$. The foliations defined by these 1-forms are pairwise equivalent by transformations of the form
 $$(x,y)\mapsto(\alpha^3\beta^2 x, \alpha^2\beta y),\quad \alpha,\beta
 \in\mathbb{C}^*\alpha^4\beta^3=1.$$
\end{enumerate}
Moreover, the foliations defined by $\omega_a$ and $\omega_{a,c}$ are not equivalent. 

\end{pro}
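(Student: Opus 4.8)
The plan is to reduce the proposition to a single explicit computation and then carry it out. Write $\omega = Q\,dx + P\,dy$ with $P = x + A_2 + xA_3^2 - x\phi$ and $Q = x^2 + y\phi$, so that the singularities of $\mathcal{F}$ in the given affine chart are the common zeros of $P$ and $Q$. The dual vector field $P\,\partial_x - Q\,\partial_y$ has linear part $x\,\partial_x$ at the origin, with eigenvalues $1$ and $0$, so the origin is always a saddle-node. Since $d=3$, formula \eqref{Milnorsum} gives $\sum_q \mu(\mathcal{F},q)=13$, and every Milnor number is a positive integer; hence the origin is the \emph{only} singularity of $\mathcal{F}$ precisely when $\mu(\mathcal{F},0)=\dim_{\mathbb{C}}\mathbb{C}\{x,y\}/(P,Q)=13$, which in turn automatically rules out singular points on the line at infinity. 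So the whole statement is equivalent to classifying the $1$-forms $\omega$ of the shape \eqref{eqoms} for which $\mu(\mathcal{F},0)=13$.

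The second step is to turn the condition $\mu(\mathcal{F},0)=13$ into polynomial equations on the coefficients. Since $P = x\cdot u + a_{02}y^2$ with $u(0)=1$ a unit, the curve $\{P=0\}$ is smooth at the origin; a short analysis shows that $a_{02}\neq 0$ is forced, so $\{P=0\}$ is tangent to $\{x=0\}$ and can be parametrized as $x=g(y)$ with $g(y)=-a_{02}y^2+O(y^3)$, which gives $\mu(\mathcal{F},0)=\ord_y Q(g(y),y)$. A brief expansion yields $Q(g(y),y)=(a_{02}^2+p_{03})y^4+O(y^5)$, so $\mu(\mathcal{F},0)>4$ already forces $p_{03}=-a_{02}^2$. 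Pushing the expansions of $g$ and of $Q(g(y),y)$ up to order $13$, the requirement $\mu(\mathcal{F},0)=13$ becomes the vanishing of the coefficients of $y^5,\dots,y^{12}$ together with the non-vanishing of the coefficient of $y^{13}$ --- a finite system of polynomial equations, plus one open condition, in the coefficients of $A_2$, $A_3^2$ and $\phi$.

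Solving this system is the heart of the matter and the step I expect to be the main obstacle. To keep it tractable I would use the weight $\operatorname{wt}(x)=1$, $\operatorname{wt}(y)=2$, which is respected both by the shape \eqref{eqoms} and by the scalings $(x,y)\mapsto(\lambda x,\lambda^2 y)$: this makes the equations quasi-homogeneous, so that they can be triangulated, and the scaling symmetry can be used to normalize one free coefficient. The expected outcome, to be confirmed by the calculation, is that the solution set consists of exactly two pieces which, after renaming the parameters, are the families $\omega_a$ and $\omega_{a,c}$, with the relation $c^3=3a^4$ arising as the compatibility condition of the higher-order equations. The stated within-family equivalences I would then check by direct substitution; for instance $(x,y)\mapsto(\lambda x,\lambda^2 y)$ sends $\omega_a$ to $\lambda^3\omega_{\lambda^3 a}$, so all the $\omega_a$ are mutually equivalent, and similarly for family 2.

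It remains to prove that $\omega_a$ and $\omega_{a,c}$ define inequivalent foliations. A projective change of coordinates realizing such an equivalence must carry the unique singularity to the unique singularity, so it may be assumed to fix the origin, and it must send the strong, respectively weak, separatrix direction to the strong, respectively weak, one; for both families these directions are the $x$- and $y$-axes, so the change of coordinates has diagonal linear part and is therefore of a very restricted form. Substituting such a transformation into the two normal forms and comparing coefficients --- equivalently, comparing a projective invariant such as a suitable jet of the strong separatrix after using up the residual scaling freedom --- shows that no such transformation exists; concretely, in $\omega_a$ one has $a_{20}=a_{11}=0$ while in $\omega_{a,c}$ these coefficients are nonzero, and this distinction is stable under the admissible coordinate changes. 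The only genuinely computational points are thus solving the order-$13$ system and checking that no admissible coordinate change links the two families.
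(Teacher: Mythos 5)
Your overall strategy is exactly the paper's: reduce ``only one singularity'' to $\mu(\mathcal{F},0)=13$ via \eqref{Milnorsum}, parametrize the smooth curve $\{P=0\}$ (the paper's curve $A=0$) as a graph over $y$, and translate $\mu(\mathcal{F},0)=13$ into the vanishing of the coefficients of $y^4,\dots,y^{12}$ in $Q$ restricted to that branch together with the non-vanishing of the coefficient of $y^{13}$; your leading term $(a_{02}^2+p_{03})y^4$ matches the paper's condition $d_4=0\iff p_{03}=-a_{02}^2$. The gap is that you stop exactly where the content of the proposition begins: you never solve the system, and you present its solution set only as an ``expected outcome, to be confirmed by the calculation''. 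The classification into precisely the two families $\omega_a$ and $\omega_{a,c}$, with the relation $c^3=3a^4$, is not a formal consequence of quasi-homogeneity of the equations; it comes from a nontrivial elimination in which one must also dispose of degenerate branches before dividing --- the paper shows, for instance, that $a_{02}^4+a_{12}^3=0$ and $a_{02}^4-a_{12}^3=0$ each force $d_{13}=0$ (hence $\mu>13$), and only then reaches the dichotomy $d_{12}=0\iff a_{12}=0$ or $3a_{02}^4=a_{12}^3$ that produces the two families. Without carrying this out, neither direction of the ``if and only if'' is actually proved.

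The closing inequivalence argument also has a hole. A projective automorphism conjugating the two foliations may indeed be assumed to fix the origin and to have diagonal linear part (it must respect the strong and weak eigendirections), but such a map is not thereby a diagonal linear map: in the affine chart it has the form $(x,y)\mapsto\bigl(sx/(1+ux+vy),\,ty/(1+ux+vy)\bigr)$, and the denominator does perturb the quadratic coefficients of the $1$-form, so the assertion that ``$a_{20}=a_{11}=0$ is stable under the admissible coordinate changes'' is precisely what needs to be computed rather than assumed. Compare Example \ref{ej4}, where the reduction to maps $(x,y)\mapsto(ax,by)$ uses the additional geometric input that both separatrices are straight lines whose tangency points at infinity must be preserved; that input is not available for $\omega_a$ and $\omega_{a,c}$, whose separatrices need not be lines. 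So this step, too, requires either an explicit computation over the full four-parameter group of admissible projective maps or a genuinely projectively invariant quantity distinguishing the two families.
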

\begin{proof} We only outline the proof and omit several long calculations easily done with a computer. 
By the Implicit Function Theorem the curve $A=0$ near the origin is parametrized by $(x(t),t)$, where
\[
x(t)=c_2t^2+c_3t^3+c_4 t^4+\cdots c_{11}t^{11}+\cdots.
\]
Since $A(x(t),t)=0$ we obtain

\begin{align*}
c_2 & = -a_{20},\qquad  c_3=-a_{11}c_2,\qquad c_4=-a_{21}c_2-a_{20}c_2^2-a_{11}c_3, \\
c_5 &=-a_{12}c_2^2-a_{21}c_3-2a_{20}c_2c_3-a_{11}c_4+c_2p_{03},\\
c_6 &=-a_{03}c_2^3-2a_{12}c_2c_3-a_{20}c_3^2-a_{21}c_4-2a_{20}c_2c_4-a_{11}c_5+c_3 p_{03}+c_2^2p_{12},\\
&\hspace{2mm}\vdots \\
&\hspace{-5.5mm}\begin{aligned}
c_{11} = -a_{11} c_{10} - 3 a_{03} c_3 c_4^2 - 3 a_{03} c_3^2 c5 - 6 a_{03} c_2 c_4 c_5 - 
   a_{12} c_5^2 -a_{21}c_5^2- 6 a_{03} c_2 c_3 c_6 -\\
    \hspace{10mm} 2 a_{12} c_4 c_6 -2a_{20}c_5c_6- 3 a_{03} c_2^2 c_7 - 
   2 a_{12} c_3 c_7 - 
   -2a_{20}c_4c_7-2 a_{12} c_2 c_8 -\\ 
   2a_{20}c_3c_8- a_{21} c_9 -2a_{20}c_2c_9+c_8p_{03} + 
   2 c_4 c_5 p_{12} + 2 c_2 c_7 p_{12} + 3 c_3^2  c_4 p_{21} +\\ 
    6c_2c_3c_5p_{21}+3c_2^2 c_6p_{21}+4 c_2 c_3^2 p_{30} + 12 c_2^2 c_3c_4 p_{03} + 4c_2^3c_5 p_{30},
   \end{aligned}
\end{align*}
that is, 
$$c_j=c_j(a_{20},a_{11},a_{02},a_{30},a_{21},a_{12},p_{30},p_{21},p_{12},p_{03}),\quad j=2, \ldots, 11.$$
Then 
\[B(x(t),t)= d_2 t^2+d_3t^3+\cdots+d_{13}t^{13}+\cdots,\]
where $d_j=d_j(c_2,c_3,c_4,\cdots,c_{12})$ for $j=2, \dots,13$. We must find conditions on the coefficients of $A$ and $ B$ such that
$d_j=0$ for all $j=2, \cdots,12$ and $d_{13} \neq 0$. Note that $a_{02} \neq 0$, otherwise in~\eqref{eqoms} we have that $L=\{x=0\}$ is an invariant line.
Straightforward calculations allows us to obtain the following conclusions:
\begin{align*}
d_2&=d_3=0,\\
d_4&= 0 \iff p_{03}=-a_{02}^2,\\
d_5&= 0 \iff p_{12}=-2a_{02} a_{11},\\
d_6&= 0 \iff p_{21}=-a_{11}^2+2a_{12}-2a_{02}a_{20},\\
d_7&= 0 \iff p_{30}=-2(a_{02}+a_{11}a_{20}-a_{21}),\\
d_8&= 0 \iff a_{30}=\frac{2a_{02}^2a_{11}-a_{12}^2+a_{02}^2a_{20}^2}{2a_{02}^2},\\
d_9&= 0 \iff a_{20}=\frac{-a_{11}a_{12}^2+a_{02}a_{12}a_{21}}{a_{02}^3},
\end{align*}
and $d_{10}=0$ is equivalent to
\begin{equation}\label{eq:d10}
a_{02}(a_{02}^4+a_{12}^3)a_{21}=-a_{02}^6+3a_{02}^4a_{11}a_{12}+a_{02}^2a_{12}^3+a_{11}a_{12}^4.
\end{equation}
We claim that $a_{02}^4+a_{12}^3\neq 0$. Assume that $a_{02}^4+a_{12}^3=0$.  Then $a_{12} \neq 0$ and~\eqref{eq:d10} simplifies to
\begin{equation}\label{eq:d10s}
0=a_{02}(a_{02}^2-a_{11}a_{12}) a_{21}+a_{11}^2a_{12}^2+a_{12}^3. 
\end{equation}
If $a_{02}^2-a_{11}a_{12}=0$ then $a_{11}=\dfrac{a_{02}^2}{a_{12}}$, which allow us
to obtain a simplified  expression of $d_{12}$ as $$d_{12}=a_{02} a_{12}^2 (-2 a_{02} + a_{21})^3.$$ Then $d_{12}=0$ implies $a_{21}=2a_{02}$ and this  equality simplifies the expression of $d_{13}$ to finally obtain $d_{13}=0$. If, on the other hand, $a_{02}^2-a_{11}a_{12}\neq 0$, then equation~\eqref{eq:d10s} implies 
$$a_{21}=\dfrac{-a_{11}^2a_{12}^2-a_{12}^3}{a_{02}(a_{02}^2-a_{11}a_{12})},$$ 
which also leads to $d_{13}=0$. In both cases the Milnor number is not 13. Therefore $a_{02}^4+a_{12}^3 \neq 0$ and 
\[
 d_{10} = 0 \iff a_{21}=\frac{-a_{02}^6+3a_{02}^4a_{11}a_{12}+a_{02}^2a_{12}^3+a_{11}a_{12}^4}{a_{02}(a_{02}^4+a_{12}^3)}.
\]
By working with the expressions of $d_{11},d_{12}$ and $d_{13}$ we can see that  $a_{02}^4-a_{11}^3 \neq 0$,  otherwise we would obtain $d_{11}=d_{12}=d_{13}=0$.  Then we can prove that
\begin{align*}
 &d_{11}=0 \iff a_{11}=\frac{-3a_{02}^4a_{12}^2-a_{12}^5}{a_{02}^4-a_{12}^3},\textrm{ and }\\
& d_{12}=0 \iff a_{12}=0 \quad \textrm{ or } \quad 3a_{02}^4-a_{12}^3=0.
\end{align*}
Let us analize each case separately:
\begin{itemize}
 \item $a_{12}=0$ : In this case $d_{11}=d_{12}=0$ and $d_{13}=-4a_{02}^8\neq 0$, so 
 we obtain
 \[
  \omega=\left[x+a_{02}y^2-a_{02}x^2y-x(-4a_{02}x^3-a_{02}^2y^3)\right]dy+\left[x^2+y(-4a_{02}x^3-a_{02}^2y^3\right] dx
 \]
\item $3a_{02}^4-a_{12}^3=0$: In this case $d_{11}=d_{12}=0$ and $d_{13}=\frac{-64 a_{12}^3}{9}\neq 0$, so we obtain
\begin{multline*}
 w=\Bigg[x+
 \frac{5 a_{12}x^2}{a_{02}}+\frac{3a_{12}^2xy}{a_{02}^2}+a_{02} y^2\\ +x\bigg(\frac{15a_{12}^2x^2}{a_{02}^2}+14a_{02}xy+a_{12}y^2\bigg)\\
 +x\bigg( 64a_{02} x^3+a_{02}^2y^3+ 35 a_{12} x^2y+\frac{18 a_{02}^3 xy^2}{a_{12}}\bigg) \Bigg]dy\\
 + \Bigg[x^2-y\bigg( 64a_{02} x^3+a_{02}^2y^3+ 35 a_{12} x^2y+\frac{18 a_{02}^3 xy^2}{a_{12}}\bigg)\Bigg]dx.
\end{multline*}
\end{itemize}
The remaining assertions of the proposition are not difficult to prove, so we leave the verification to the reader.

\end{proof}

\begin{exa}\label{ej4} If $b\in\mathbb{C}\backslash\{1,2\}$,  $\alpha=\frac{1+b-b^2}{b-1}$, $\beta=\frac{(b-2)^2}{1-b}$ and $\gamma=1-b$, the 1-form
\begin{align*}
   \omega_b=(x + y^2+ \alpha x^2 y+x(\beta x^3+\gamma y^3))dy + 
  (x^2 +bx y^2-y(\beta x^3+\gamma y^3))dx
  \end{align*}
   defines a foliation on $\mathbb{P}^2$ with a unique singularity which is saddle-node. Note that
   $b=0$ gives the 1-form $\omega_1$ of Proposition \ref{propro}.
 As we see next, the family of 1-forms $\{\omega_b\}$ define pairwise non-equivalent foliations of $\mathbb{P}^2$. Note first that the saddle-node at the origin of 
 $\omega_b$ has $\{y=0\}$ and $\{x=0\}$ as strong and weak separatrices respectively. It is easy to see that the foliation $\omega_b=0$ has order 2 tangencies with $\{y=0\}$ and
 $ \{x=0\}$ at the origin. Thus, since the foliation has degree 3, we conclude that the foliation 
  is tangent to $\{y=0\}$ and $\{x=0\}$ at the infinity and there are no other
 tangencies with these lines outside the saddle-node. Therefore, if $T$ is an automorphism conjugating $\omega_{b_1}=0$ with $\omega_{b_2}=0$, then $T$ has to preserve the
 lines $\{y=0\}$ and $\{x=0\}$  and their points at infinity. This means that $T$ is induced by a map of the form $h\colon (x,y)\mapsto (ax,by)$. Now it is easy to compute 
 $h^*(\omega_{b_2})$ and conclude that $h^*(\omega_{b_2})\in \mathbb{C}^*\omega_{b_1}$ only if $ b_1=b_2$. 
  \end{exa}

\begin{exa}\label{ej5.sn}
Consider the 1-form
\begin{align*}
   \omega=\left[x + A_2(x,y)+ xA_3^2(x,y)+x\phi(x,y)\right]dy + 
  \left[tx^2+B_3(x,y) -y\phi(x,y)\right]dx, 
  \end{align*}
  with
  \begin{align*}
  &A_2(x,y)=\alpha x^2+wxy+ry^2\\
  &A_3^2(x,y)=\dfrac{t w(4 r t-3s)}{s} x^2 + \beta x y + r\alpha y^2\\
 & B_3(x,y)=\dfrac{t w(3 s - 2 r t)}{s}x^2 y + s x y^2\\
 & \phi(x,y)=\dfrac{t (2rt-s)^2}{(rt-s)}x^3 
  + \dfrac{2 t (r t-s) w^2}{s}x^2 y + \dfrac{(r t-s) (s + 2 r t) w }{s}x y^2 + 
 r (r t-s) y^3,\\
  &\alpha =  \dfrac{t w^2(rt-s)}{s^2},\\
  &   \beta = 
  \frac{1}{s^2 (r t-s)}(s^4 - r s^3 t - r^2 s^2 t^2 + s^2 t w^3 - 2 r s t^2 w^3 + r^2 t^3 w^3),
  \end{align*}  
   where
  $r,s,t\in\mathbb{C}^*$, $w\in \mathbb{C}$, $rt\neq s$. The 1-form $\omega$
  define a foliation on $\mathbb{P}^2$ with a unique singularity which is a saddle-node.
If we set $w=0$ the 1-form $\omega$ adopt the form
\begin{align*}
   \left[x + ry^2+\beta x^2 y+x\phi(x,y)\right]dy + 
  \left[tx^2+s x y^2 -y\phi(x,y)\right]dx, 
  \end{align*}
  where 
\[  
\phi(x,y)=\dfrac{t (2rt-s)^2}{(rt-s)}x^3 + r (r t-s) y^3,
\]
 
  \[
  \beta = 
  \frac{1}{r t-s}(s^2 - r s t - r^2 t^2)
  \]
and $r,s,t\in\mathbb{C}^*$, $rt\neq s$. This family is equivalent to that given by Alc\'antara and Pantale\'on-Mondrag\'on in \cite{ClP}. 
Also note that if $w=0$, $r=t=1$ and $s=b$  we recover Example~\ref{ej4}.
\end{exa}

 \subsection{Nilpotent examples}
Finally, we give some examples of degree 3 foliations of $\mathbb{P}^2$ with a unique singularity which is  nilpotent. As in previous case, we have done several calculations with
 the aid of a computer, so we just present the examples omitting the verifications.  
\begin{exa}\label{ej5}
Given  $a=(a_1,a_2,a_3,a_4)\in\mathbb{C}^4$ with $a_4\neq 0$, then
\begin{align*}
\omega_a=\left[y+x^2+2a_1xy+ a_{1}x^3+a_{2}x^2y-a_3xy^2
-x \phi(x,y)\right] dy+\\
\left[xy+x^3+a_{1}x^2y+(a_{1}^2-a_{2})xy^2+a_3y^3+y \phi(x,y)\right]dx,
 \end{align*}
where $$\phi(x,y)= (a_{1}^2-a_{2})x^3+(-a_{2}a_{1}+a_{1}^2+a_3)x^2y+a_{1}a_3xy^2+a_4y^3$$ defines a degree 3 foliation with a nilpotent singularity
at the origin  which is the only singularity on $\mathbb{P}^2$. Moreover, given $a,\tilde{a}\in\mathbb{C}^4$, the foliations defined by $\omega_a$
 and $\omega_{\tilde{a}}$ are equivalent if and only if there exists $\lambda\in\mathbb{C}^*$
 such that $$(\tilde{a}_1,\tilde{a}_2,\tilde{a}_3,\tilde{a}_4)=(\lambda a_1, \lambda^2 a_2,
 \lambda^3a_3,\lambda^5 a_4).$$

\end{exa}

\begin{exa}\label{ej5}
Given $a=(a_{1},a_2,a_3)\in\mathbb{C}^3$, $a_2,a_3\in\mathbb{C}^*$, then
 \begin{equation*}
 \omega_a=\big(y+A_2(x,y)+A_3(x,y)-x\phi(x,y)\big)dy+ \big(xy + B_3(x,y)+y \phi(x,y) \big)dx,
 \end{equation*}
where
\begin{align*}
A_2(x,y)&= x^2 + (2 a_{1} + a_2) x y\\
A_3(x,y)&=a_{1} x^3 + \frac{1}{3} (4 a_{1}^2 + 3 a_{1} a_2 + 3 a_3) x^2 y +\\
&\qquad\quad  \frac{1}{54} a_{1} (2 a_{1}^2 - 9 a_2^2 + 18 a_3) x y^2 - 
 \frac{1}{36} a_2^2 (a_{1}^2 + 18 a_3) y^3\\
B_3(x,y)&=x^3 + (a_{1} + a_2) x^2 y - \frac{1}{3} (a_{1}^2 + 3 a_3) x y^2 - 
 \frac{1}{27} a_{1} (a_{1}^2 + 9 a_3) y^3\\
\phi(x,y)&= -\frac{1}{6} (2 a_{1}^2 + a_{1} a_2 + 6 a_3) x^3 -\frac{1}{324} a_2(2 a_{1}^4 + 36 a_{1}^2 a_3+ 81 a_3^2)  y^3-\\
 &\qquad  
 \frac{1}{54} (20 a_{1}^3 + 27 a_{1}^2 a_2+ 9a_{1} (a_2^2 + 8  a_3) + 81 a_2a_3) x^2 y -\\
 &\qquad \frac{1}{108} (4 a_{1}^4 + 10 a_{1}^3 a_2 + 3a_{1}^2(a_2^2 + 12 a_3) + 
    90 a_{1} a_2a_3 + 54 a_2^2 a_3) x y^2
\end{align*}
     defines a degree 3 foliation with a nilpotent singularity
at the origin  which is the only singularity on $\mathbb{P}^2$. Moreover, given $a,\tilde{a}\in\mathbb{C}^3$, the foliations defined by $\omega_a$
 and $\omega_{\tilde{a}}$ are equivalent if and only if there exists $\lambda\in\mathbb{C}^*$
 such that $$(\tilde{a}_1,\tilde{a}_2,\tilde{a}_3)=(\lambda a_1, \lambda a_2,
 \lambda^2a_3).$$

\end{exa}

\end{document}